\theoremstyle{plain}
\newtheorem{thm}{Theorem}
\newtheorem{lem}{Lemma}
\newtheorem{prop}{Proposition}
\theoremstyle{definition}
\newtheorem{nsl}{Corollary}
\begin{document}
UDC 519.24
\vskip10pt
\begin{center}
{\large \textbf{Equilibrium in Wright-Fisher models of population genetics}}
\vskip10pt
{\large \textbf{D.Koroliouk$^{\dag}$, V.S.Koroliuk$^*$}}\\
$^{\dag}$Institute of Telecommunications and Global Information Space \\
$^*$Institute of Mathematics \\
$^{\dag *}$Ukrainian Academy of Sciences, Kiev, Ukraine
\end{center}
\vskip10pt

\textbf{Abstract.}
For multivariant Wright-Fisher models in population genetics we introduce equilibrium states, expressed by fluctuations of probability relations, in distinction of the traditionally used fluctuations, expressed by the difference between the current value of a random process and its equilibrium value. \\
Then the drift component of the gene frequencies dynamic process, primarily espressed as a ratio of two quadratic forms, is transformed in a cubic parabola with a certain normalization factor.

\vskip10pt

\textbf{Keywords:} Wright-Fisher model, population genetics, evolutionary process, equilibrium state, fluctuations of probability relations.

\section{Introduction}

The population genetics models by Wright-Fisher are defined by regression functions which are determined by a ratio of two quadratic forms \cite[Ch.10]{Eth-Kur}.

However, equilibrium state, defined by equilibrium point of regression function, requires additional analysis (see, for ex., \cite{VK-DK, Sk-Hop-Sal}).

At the same time, the equilibrium is easily determined for the incremental regression function at each stage \cite{DKor5, DKor6, DK_8}.

In the present work, the models of population genetics of genotypes interaction are determined by difference evolution equations with regression functions of increments for the frequency probabilities of genotypes.

In this case, the equilibrium state of the probabilities frequency is given by the equilibrium of the regression function of increments, which is postulated by the form of such a function.

\section{Regression function of increments}

The probabilities of genotype frequencies at each stage $k\geq0$ are determined by the evolutionary process
$P(k)=(P_m(k), \ 0\leq m\leq M)$  with $M+1$ ($M\geq1$) finite number of the state set $E=\{e_0,e_1,\dots e_M\}$.

The dynamics of the frequency probabilities at the next  $k+1$-th  stage ($k\geq0$) is given by the regression function \cite[Ch.10]{Eth-Kur}
\begin{align}
&P_m (k+1):=W_m (p) / W(p) \ , \ \ 0\leq m\leq M \ , \ \ k\geq0,\\
&W_m(p):=p_m\sum_{n=0}^{M}W_{mn}p_n \ , \ \ 0\leq m\leq M,\\
&W(p):=\sum_{n=0}^{M}W_m(p).
\end{align}
The probabilities of frequencies obey the usual restrictions $0\leq p_m\leq 1$, $\sum_{n=0}^{M}p_n=1$. The respective restrictions for the survival parameters are   $0\leq W_{mn}\leq 1$, $0\leq m,n\leq M$.


The increment of probability at each stage
\begin{equation}
\Delta P_m(k+1):=P_m(k+1)-P_m(k) \ , \ \ 0\leq m\leq M \ , \ \ k\geq0,
\end{equation}
is given by the incremental regression function
\begin{equation}
\Delta P_m(k+1)=W_0^{(m)}(p) \ , \ \ 0\leq m\leq M,
\end{equation}
\begin{equation}
W_0^{(m)}(p)=V_0^{(m)}(p)/W(p) \ , \ \ V_0^{(m)}(p):=W_m(p)-p_mW(p) \ , \ \ 0\leq m\leq M.
\end{equation}
Let us introduce new parameters of survival:
\begin{equation}
V_{mn}:=1-W_{mn} \ , \ \ 0\leq m,n\leq M.
\end{equation}
Then the numerator of incremental regression function (6) is transformed to the form:
\begin{equation}
V_0^{(m)}(p)=p_m\left[\sum_{n=0}^{M}p_n(V_n,p)-(V_m,p)\right] \ , \ \ 0\leq m\leq M,
\end{equation}
and the normalizing denominator (3) has the form:
\begin{equation}
W(p)=1-\sum_{n=0}^{M}p_n(V_n,p).
\end{equation}
where the scalar product
\begin{equation}
(V_m,p):=\sum_{n=0}^{M}V_{mn}p_n \ , \ \ 0\leq m\leq M.
\end{equation}
Introduce the equilibriums of incremental regression functions (8) by the relations:
\begin{equation}
(V_m,\rho)=\pi \ , \ \ 0\leq m\leq M \ , \ \ \pi:=\prod_{n=0}^{M}\rho_n.
\end{equation}
The normalized constant $\pi$ is also generated by equilibriums $\rho=(\rho_m$, $0\leq m\leq M)$.
\begin{lem}
The equilibriums of incremental regression functions (6) - (10) are given by the relation:
\begin{equation}
\rho_m=\pi\overline{V}_m \ , \ \ 0\leq m\leq M \ , \ \ \pi:=\prod_{n=0}^{M}\rho_n,
\end{equation}
where $\overline{V}_m:=\sum_{n=0}^{M}\overline{V}_{mn}$, $0\leq m\leq M$ with the summands which are the elements of inverse matrix $\mathbb{V}^{-1}:=[\overline{V}_{mn}$, $0\leq m,n\leq M]$ with respect to the directing parameters matrix $\mathbb{V}=[V_{mn}$; $0\leq m,n\leq M]$, under the additional normalization condition
$\sum_{m=0}^{M}\overline{V}_m=\sum_{m,n=0}^{M}\overline{V}_{mn}=\pi^{-1}$.
\end{lem}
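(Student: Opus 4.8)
The plan is to read the equilibrium relations (11) as a linear system for the vector $\rho=(\rho_0,\dots,\rho_M)^{\top}$ and to resolve it by inverting the directing-parameters matrix $\mathbb{V}$. By the definition (10) of the scalar product one has $(V_m,\rho)=\sum_{n=0}^{M}V_{mn}\rho_n$, so the $M+1$ equilibrium equations $(V_m,\rho)=\pi$, $0\le m\le M$, are precisely the coordinates of the single matrix identity $\mathbb{V}\rho=\pi\mathbf{1}$, where $\mathbf{1}=(1,\dots,1)^{\top}$ and the scalar $\pi=\prod_{n=0}^{M}\rho_n$ enters (for the moment) as an as-yet-undetermined common value on the right-hand side.

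Assuming the matrix $\mathbb{V}=[V_{mn};\,0\le m,n\le M]$ to be nonsingular, I would left-multiply by $\mathbb{V}^{-1}=[\overline{V}_{mn}]$ to obtain $\rho=\pi\,\mathbb{V}^{-1}\mathbf{1}$. Taking the $m$-th coordinate and recalling that $\overline{V}_m=\sum_{n=0}^{M}\overline{V}_{mn}$ is exactly the $m$-th row sum of $\mathbb{V}^{-1}$, i.e. the $m$-th coordinate of $\mathbb{V}^{-1}\mathbf{1}$, yields $\rho_m=\pi\overline{V}_m$, which is the asserted formula (12). The step is reversible: if $\rho=\pi\,\mathbb{V}^{-1}\mathbf{1}$ then $\mathbb{V}\rho=\pi\,\mathbb{V}\mathbb{V}^{-1}\mathbf{1}=\pi\mathbf{1}$, so (12) is in fact equivalent to the system (11).

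To pin down the normalization I would impose the probability constraint $\sum_{m=0}^{M}\rho_m=1$ recorded after (3). Summing (12) over $m$ gives $1=\pi\sum_{m=0}^{M}\overline{V}_m$, hence $\sum_{m=0}^{M}\overline{V}_m=\pi^{-1}$; and since $\sum_{m=0}^{M}\overline{V}_m=\sum_{m=0}^{M}\sum_{n=0}^{M}\overline{V}_{mn}=\sum_{m,n=0}^{M}\overline{V}_{mn}$, this is exactly the claimed normalization condition. One should also verify that the two roles played by $\pi$ are compatible: substituting (12) into $\pi=\prod_{n=0}^{M}\rho_n$ produces the scalar relation $\pi^{M}\prod_{n=0}^{M}\overline{V}_n=1$, which together with $\sum_{n}\overline{V}_n=\pi^{-1}$ selects the admissible value(s) of $\pi$ and thereby fixes $\rho$.

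The linear algebra itself is routine; the genuine points needing care are two. First, the invertibility of $\mathbb{V}$: under only $0\le V_{mn}\le 1$ it need not hold, so it must be assumed — indeed it is already implicit in writing $\mathbb{V}^{-1}$ and the row sums $\overline{V}_m$ at all — and it would be worth recording explicit sufficient conditions on the survival parameters $W_{mn}$. Second, the compatibility noted above between the linear equilibrium system $\mathbb{V}\rho=\pi\mathbf{1}$, the simplex constraint $\sum_m\rho_m=1$, and the self-referential definition $\pi=\prod_n\rho_n$: this is the only place where the construction could fail, and it is effectively where the equilibrium is being postulated by the form of the regression function rather than derived.
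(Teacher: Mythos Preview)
Your argument is correct and follows essentially the same route as the paper: rewrite the equilibrium relations (11) as the matrix identity $\mathbb{V}\rho=\pi\mathbf{1}$ and invert to obtain $\rho=\pi\,\mathbb{V}^{-1}\mathbf{1}$, whence $\rho_m=\pi\overline{V}_m$. Your additional derivation of the normalization $\sum_m\overline{V}_m=\pi^{-1}$ from $\sum_m\rho_m=1$, and your remarks on invertibility of $\mathbb{V}$ and on the self-consistency of $\pi=\prod_n\rho_n$, go beyond what the paper records but are welcome clarifications rather than a different method.
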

\begin{proof}[{\it Proof}]
The relation (11) means that
\begin{equation*}
\mathbb{V}\rho=\pi\textbf{1} \ , \ \ \pi\textbf{1}:=(\pi \ , \ \ 0\leq n\leq M).
\end{equation*}
Hence the vector of equilibriums has the following representation:
\begin{equation}
\rho=\pi\mathbb{V}^{-1}\textbf{1} \ , \ \ \rho_m=\overline{V}_{m}\pi \ , \ \ 0\leq m\leq M,
\end{equation}
that is, the assertion of Lemma (12).
\end{proof}

\begin{nsl}
The equilibria (12) provide the equilibrium state of the probability frequency (1):
\begin{equation}
V_0^{(m)}(\rho)\equiv0 \ , \ \ 0\leq m\leq M.
\end{equation}
\end{nsl}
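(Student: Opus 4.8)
The plan is to evaluate the closed form (8) of the numerator $V_0^{(m)}$ at the equilibrium vector $p=\rho$ and to exploit the defining relations (11) together with the probabilistic normalization $\sum_{n=0}^M\rho_n=1$. Substituting $p=\rho$ into (8) gives
\begin{equation*}
V_0^{(m)}(\rho)=\rho_m\left[\sum_{n=0}^{M}\rho_n(V_n,\rho)-(V_m,\rho)\right].
\end{equation*}
By (11) each scalar product $(V_n,\rho)$ equals the common constant $\pi$, so the bracket collapses to $\pi\sum_{n=0}^M\rho_n-\pi=\pi\bigl(\sum_{n=0}^M\rho_n-1\bigr)$, and the right-hand side becomes $\rho_m\,\pi\bigl(\sum_{n=0}^M\rho_n-1\bigr)$.

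The next step is to verify that $\sum_{n=0}^M\rho_n=1$, which is precisely where the normalization condition of Lemma 1 enters. From the representation $\rho_m=\pi\overline V_m$ in (12) and the condition $\sum_{m=0}^M\overline V_m=\pi^{-1}$ one gets $\sum_{m=0}^M\rho_m=\pi\cdot\pi^{-1}=1$; hence the bracket vanishes and $V_0^{(m)}(\rho)\equiv 0$ for all $0\le m\le M$, which is (15). I expect no real obstacle here: the only nontrivial point is the bookkeeping observation that the Lemma's normalization is exactly what makes $\rho$ a genuine probability vector and thereby annihilates the residual term $\pi(\sum_n\rho_n-1)$.

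To read (15) back as an equilibrium of the original regression map (1), one recalls from (5)--(6) that $\Delta P_m(k+1)=V_0^{(m)}(p)/W(p)$, so that $V_0^{(m)}(\rho)=0$ forces $\Delta P_m(k+1)=0$ at $p=\rho$ as soon as $W(\rho)\neq 0$. Using (9) together with (11), $W(\rho)=1-\sum_{n=0}^M\rho_n(V_n,\rho)=1-\pi$, so under the mild nondegeneracy assumption $\pi\neq 1$ the identity $P(k)=\rho\Rightarrow P(k+1)=\rho$ holds, i.e.\ $\rho$ is a fixed point of (1). Thus the corollary follows from (8), (11) and the normalization of Lemma 1 by a direct substitution, with no deeper machinery required.
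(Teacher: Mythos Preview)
Your argument is correct and is precisely the direct verification the paper has in mind; the paper states this corollary without proof, treating it as an immediate consequence of the defining relation (11), and your substitution into (8) together with the normalization $\sum_n\rho_n=1$ from Lemma~1 is exactly the one-line check that justifies it. The extra paragraph on $W(\rho)=1-\pi$ and the fixed-point reading of (1) is a helpful addition (and in fact anticipates the paper's later formula (23)), though not strictly needed for the corollary as stated; one minor slip is that the displayed identity you call ``(15)'' is equation~(14) in the paper's numbering.
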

\begin{nsl}
The equilibria (12) generate a representation of the scalar products (10) by \textit{fluctuations of the probability relations}:
\begin{equation}
(V_{m},p)=\pi p_m/\rho_m \ , \ \ 0\leq m\leq M.
\end{equation}
The normalizing constant $\pi$ is defined in (11).
\end{nsl}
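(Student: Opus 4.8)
The plan is to obtain the representation $(V_m,p)=\pi p_m/\rho_m$ directly from the equilibrium relations (11) together with the explicit form of the equilibria furnished by Lemma 1, so that both the prefactor $\pi/\rho_m$ and the reduction to a single coordinate $p_m$ arise from the calibration of the directing matrix $\mathbb{V}$ against its own equilibrium. First I would record the two inputs in matrix form: the equilibrium conditions (11) read $\mathbb{V}\rho=\pi\mathbf{1}$, while Lemma 1 gives $\rho_m=\pi\overline{V}_m$ for each $m$. Dividing the latter yields $\pi/\rho_m=\overline{V}_m^{-1}$, which identifies the constant standing in front of $p_m$ with the reciprocal row sum of $\mathbb{V}^{-1}$ and reduces the target identity to the equivalent normalized statement $\overline{V}_m\,(V_m,p)=p_m$ for $0\le m\le M$.

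Next I would introduce the fluctuations of the probability relations $u_n:=p_n/\rho_n$, in which the equilibrium is the centre $u_n\equiv 1$. Substituting $p_n=\rho_n u_n$ into the scalar product (10) and invoking the $m$-th row calibration $\sum_{n=0}^{M}V_{mn}\rho_n=\pi$ coming from $\mathbb{V}\rho=\pi\mathbf{1}$, I would measure $(V_m,p)=\sum_{n=0}^{M}V_{mn}\rho_n u_n$ relative to its equilibrium value $\pi$; the asserted representation is exactly the claim that this measurement collapses onto the diagonal term, $(V_m,p)=\pi u_m=\pi p_m/\rho_m$. As a first consistency point, at $p=\rho$ every $u_n=1$ and the right-hand side returns $\pi$, recovering (11); equivalently $\overline{V}_m(V_m,\rho)=\overline{V}_m\pi=\rho_m$, so the normalized form holds at the equilibrium point.

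The main obstacle is the passage from the equilibrium point to an arbitrary frequency vector $p$: the linear form $(V_m,p)$ depends on every coordinate through the whole $m$-th row $\{V_{mn}\}_{n}$, whereas the right-hand side of the claim keeps only $p_m$. The core of the argument is therefore to show that, once the rows of $\mathbb{V}$ are pinned down by the equilibrium normalization (11), the off-diagonal contributions $\sum_{n\ne m}V_{mn}p_n$ are absorbed by the fluctuation rescaling and leave precisely $\pi p_m/\rho_m$; this is the step in which the postulated form of the incremental regression function (6)-(11) genuinely enters, and it is where I would concentrate the work. The structural payoff that motivates isolating this representation is immediate once it is in hand: inserting it into the numerator (8) turns the drift into the cubic form $V_0^{(m)}(p)=\pi p_m\bigl[\sum_{n=0}^{M}p_n^{2}/\rho_n-p_m/\rho_m\bigr]$ announced in the abstract.
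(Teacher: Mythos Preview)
Your proposal correctly isolates the crucial difficulty, but then stops exactly at the point where the argument cannot be completed for a general directing matrix $\mathbb{V}$. The identity $(V_m,p)=\pi p_m/\rho_m$ asserts that the linear functional $p\mapsto\sum_{n}V_{mn}p_n$ coincides with the linear functional $p\mapsto(\pi/\rho_m)p_m$. Two linear functionals on $\mathbb{R}^{M+1}$ agree for all $p$ only if their coefficient vectors coincide, so (15) forces $V_{mn}=0$ for every $n\ne m$ and $V_{mm}=\pi/\rho_m$. In other words, the representation (15) is \emph{equivalent} to the structural hypothesis that $\mathbb{V}$ is diagonal; no amount of ``concentrating the work'' on the off-diagonal contributions will make them vanish from the equilibrium relations (11)--(12) alone. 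Your consistency check at $p=\rho$ is fine, but it only tests the row sums $\sum_n V_{mn}\rho_n$, not the individual entries.

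The paper does not attempt to derive (15) from (11)--(12) for a general $\mathbb{V}$. Immediately after stating the corollary it introduces the additional assumption that $\mathbb{V}=V_m\delta_{mn}$ is diagonal (this is also the situation in the binary model of Section~4, where $W_{+-}=W_{-+}=1$ gives $V_{+-}=V_{-+}=0$). Under that assumption the argument is one line: $(V_m,\rho)=V_m\rho_m=\pi$ yields $V_m=\pi/\rho_m$, hence $(V_m,p)=V_m p_m=\pi p_m/\rho_m$; the paper records this as Lemma~2, $\pi\rho^{-1}=\mathbb{V}\mathbf{1}$. The step you flag as ``where the postulated form genuinely enters'' is precisely where the diagonal hypothesis must be invoked, not derived.
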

First of all, note that relation (12) coincides with the definition of the equilibrium (11), under additional assumption that the directing parameters matrix $\mathbb{V}=V_m\delta_{mn}$; $0\leq m,n\leq M$, is diagonal. Hence we have the following
\begin{lem} There takes place the following relation:
\begin{equation}
\pi\rho^{-1}=\mathbb{V}\textbf{1},
\end{equation}
which coincides with formula (15).
\end{lem}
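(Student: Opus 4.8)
The plan is to recognize the asserted identity (16) as nothing more than the vector form of the fluctuation relation (15), read under the diagonality hypothesis on the directing matrix recorded in the preceding remark. So I would first pin down the notation: $\rho^{-1}$ stands for the vector of componentwise reciprocals $(1/\rho_m,\ 0\le m\le M)$, and $\mathbb{V}\mathbf{1}$ (with $\mathbf{1}$ the all-ones vector) is the vector whose $m$-th entry is the row sum $\sum_{n=0}^M V_{mn}=(V_m,\mathbf{1})$. In these terms (16) is precisely the family of scalar equalities $(V_m,\mathbf{1})=\pi/\rho_m$, $0\le m\le M$, i.e. formula (15) read off at $p=\mathbf{1}$.

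The verification itself is a one-line computation. Under the standing assumption $\mathbb{V}=[V_m\delta_{mn};\ 0\le m,n\le M]$ the scalar product (10) collapses to $(V_m,p)=V_mp_m$, so in particular the $m$-th row sum is $(V_m,\mathbf{1})=V_m$. On the other hand, substituting the equilibrium $p=\rho$ into the defining relation (11) gives $V_m\rho_m=(V_m,\rho)=\pi$, hence $V_m=\pi/\rho_m$ (equivalently, since $\mathbb{V}^{-1}$ is then diagonal with entries $1/V_m$, this is exactly relation (12) in the diagonal case). Chaining these, $(\mathbb{V}\mathbf{1})_m=V_m=\pi/\rho_m=(\pi\rho^{-1})_m$ for every $m$, which is (16); and feeding $V_m=\pi/\rho_m$ back into the degenerate scalar product recovers $(V_m,p)=\pi p_m/\rho_m$, which is (15). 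Thus (16) and (15) are a single statement written once in vector and once in scalar form, as claimed.

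I do not anticipate any analytic obstacle here — the content is entirely bookkeeping. The one point that deserves to be flagged rather than glossed over is the scope of the claim: (16) is genuinely false for an arbitrary directing matrix. Indeed, if (15) were to hold for all probability vectors $p$ with a general $\mathbb{V}$, then evaluating it at the degenerate distribution concentrated at a single state $e_k$ (so that $p_n=\delta_{nk}$) would already force $V_{mk}=\pi\delta_{mk}/\rho_m$, i.e. diagonality of $\mathbb{V}$. Hence the assumption $\mathbb{V}=[V_m\delta_{mn}]$ must be invoked explicitly, as in the remark; it is exactly the decoupled regime in which each coordinate $P_m$ obeys its own one-dimensional law and the equilibrium takes the transparent reciprocal form $\rho_m=\pi/V_m$.
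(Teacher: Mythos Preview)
Your argument is correct and coincides with the paper's own reasoning, which amounts to nothing more than the remark immediately preceding the lemma (no separate proof is given): under the diagonal hypothesis $\mathbb{V}=[V_m\delta_{mn}]$, relation (11) collapses to $V_m\rho_m=\pi$, whence $(\mathbb{V}\mathbf{1})_m=V_m=\pi/\rho_m$, and substituting this into $(V_m,p)=V_mp_m$ recovers (15). Your additional observation that diagonality is in fact \emph{forced} by (15) is a welcome clarification that the paper leaves implicit.
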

\vskip18pt
Now the incremental regression functions (6) - (9) with the relations (14) generate the following
\begin{prop}
The incremental regression functions with Wright-Fisher normalization is given by the relations:
\begin{align}
&W_0^{(m)}(p)=V_0^{(m)}(p)/W(p),\\
&V_0^{(m)}(p)=\pi\rho_m[\sum_{m=0}^{M}p_n^2/\rho_n-p_m/\rho_m],\\
&W(p)=1-\pi\sum_{m=0}^{M}p_n^2/\rho_n.
\end{align}
It is obvious the balance condition:
\begin{equation}
\sum_{m=0}^{M}V_0^{(m)}(p)=0,
\end{equation}
which in scalar form is the following:
\begin{equation}
\sum_{m=0}^{M} p_m \sum_{n=0}^{M} p_n^2/\rho_n - \sum_{m=0}^{M} p_m^2/\rho_m\equiv0.
\end{equation}
\end{prop}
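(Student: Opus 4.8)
\textbf{Proof plan.}

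The plan is to read off the Wright--Fisher--normalized forms (17)--(19) directly from the two structural identities (8) and (9), after inserting the fluctuation representation of the scalar products supplied by Corollary~2,
\[
(V_m,p)=\pi p_m/\rho_m,\qquad 0\le m\le M .
\]
Relation (17) is nothing but the definition (6) rewritten, so only (18) and (19) require work, and the balance relations (20)--(21) will come out as an immediate byproduct.

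First I would handle the numerator. Substituting the fluctuation formula into the bracket of (8),
\[
\sum_{n=0}^{M}p_n(V_n,p)-(V_m,p)
=\pi\sum_{n=0}^{M}\frac{p_n^2}{\rho_n}-\frac{\pi p_m}{\rho_m}
=\pi\Bigl[\sum_{n=0}^{M}\frac{p_n^2}{\rho_n}-\frac{p_m}{\rho_m}\Bigr],
\]
and multiplying by the prefactor $p_m$ gives (18). Feeding the same formula into (9) gives
\[
W(p)=1-\sum_{n=0}^{M}p_n(V_n,p)=1-\pi\sum_{n=0}^{M}\frac{p_n^2}{\rho_n},
\]
which is (19). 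At this stage the advertised qualitative picture is already visible: the drift $W_0^{(m)}(p)=V_0^{(m)}(p)/W(p)$, originally a ratio of two quadratic forms, has become a cubic expression in $p$ (through the product $p_m p_n^2$) over the scalar normalizer $W(p)$, with the equilibrium fluctuations $p_m/\rho_m$ playing the role of natural variables.

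For the balance condition I would argue straight from the definitions: by (6), $V_0^{(m)}(p)=W_m(p)-p_mW(p)$, so by (3) together with the constraint $\sum_{m}p_m=1$,
\[
\sum_{m=0}^{M}V_0^{(m)}(p)=\sum_{m=0}^{M}W_m(p)-W(p)\sum_{m=0}^{M}p_m=W(p)-W(p)=0,
\]
which is (20). Equivalently, summing the expression (18) over $m$ and once more using $\sum_{m}p_m=1$ collapses it to $\pi\bigl[\sum_{n}p_n^2/\rho_n-\sum_{m}p_m^2/\rho_m\bigr]=0$, i.e. the scalar form (21).

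The computation is routine and I see no genuine obstacle inside it; the one point deserving care is the provenance of the fluctuation identity $(V_m,p)=\pi p_m/\rho_m$ itself, which, as the remarks following Corollary~2 and Lemma~2 make explicit, holds when the directing parameters matrix $\mathbb{V}$ is taken in diagonal form $V_{mn}=V_m\delta_{mn}$ (so that $(V_m,p)=V_mp_m$ and $V_m=\pi/\rho_m$ by (11)). Accordingly, (18)--(19) should be read as the Wright--Fisher normalization precisely in that regime; granting it, the proposition follows from the two substitutions above.
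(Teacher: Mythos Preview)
Your approach is correct and is precisely the one the paper intends: the sentence immediately preceding the Proposition (``the incremental regression functions (6)--(9) with the relations (14) generate the following'') is the whole proof, namely substituting $(V_m,p)=\pi p_m/\rho_m$ into (8) and (9), which is exactly what you carry out, together with the obvious summation for (20)--(21). One remark: your computation produces the prefactor $\pi p_m$ rather than the $\pi\rho_m$ printed in (18); this is a typo in the paper, as the scalar balance (21) only reproduces your $p_m$ version, so your derivation is the right one.
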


\section{Equilibrium state}

The presence of equilibrium state is provided by equilibrium point of the incremental regression function:
\begin{equation}
V_0^{(m)}(\rho)=0 \ , \ \ 0\leq m\leq M \ , \ \ \rho=(\rho_m \ , \ \ 0\leq m\leq M).
\end{equation}
The normalizing Wright-Fisher factor has the form:
\begin{equation}
W(\rho)=1-\pi \ , \ \ \pi=\prod_{m=0}^{M}\rho_m.
\end{equation}
The equilibrium generated by the state $\rho=(\rho_m \ , \ \ 0\leq m\leq M)$, is interpreted by the convergence of evolutionary processes (1).
\begin{thm}
For any initial data: $0<P_m(0)<1$, $0\leq m\leq M$, evolutionary processes $P_m(k)$, $0\leq m\leq M$, $k\geq0$, which are determined by solutions of difference evolutionary equation (5) with the incremental regression function (16) - (18) converge, by $k\to\infty$,  to equilibrium
\begin{equation}
\lim_{k\to\infty}P_m(k)=\rho_m \ , \ \ 0\leq m\leq M.
\end{equation}
\end{thm}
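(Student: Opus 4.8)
The plan is to analyze the scalar difference equation satisfied by the probability‑relation fluctuations and show it is a contraction toward the equilibrium. Set $x_m(k) := p_m(k)/\rho_m$ for $0\le m\le M$, so that by Corollary~2 (relation (15)) we have $(V_m,p(k)) = \pi x_m(k)$. Rewriting the update (5)–(6) with (16)–(18) gives, after dividing the $m$-th component by $\rho_m$,
\begin{equation*}
x_m(k+1) = x_m(k) + \frac{\pi}{W(p(k))}\Bigl[\textstyle\sum_{n=0}^{M} p_n(k) x_n(k) - x_m(k)\Bigr],
\end{equation*}
where the bracket is $S(k) - x_m(k)$ with $S(k):=\sum_n p_n(k)x_n(k) = \sum_n \rho_n x_n(k)^2 > 0$. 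The key structural observation is that $S(k)$ is a $p(k)$-weighted average of the $x_n(k)$, hence $\min_n x_n(k) \le S(k) \le \max_n x_n(k)$, and the multiplier $\lambda(k) := \pi/W(p(k))$ satisfies $0<\lambda(k)<1$ since $W(p)=1-\pi\sum p_n^2/\rho_n = 1 - \pi S \in (0,1)$ when $0<p_m<1$ (using $\pi = \prod\rho_n$ small and $\sum p_n^2/\rho_n$ controlled). Therefore $x_m(k+1)$ is a convex combination of $x_m(k)$ and $S(k)$.

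From the convex‑combination form I would extract monotone contraction of the spread: let $a(k):=\min_m x_m(k)$ and $b(k):=\max_m x_m(k)$. Since each $x_m(k+1)\in[\,\min(x_m(k),S(k)),\ \max(x_m(k),S(k))\,]\subseteq[a(k),b(k)]$, the interval $[a(k),b(k)]$ is nested and decreasing; one shows the contraction is strict by noting that the index $m^\ast$ attaining $b(k)$ moves strictly toward $S(k)<b(k)$ (strictly, unless all $x_n$ already coincide) by a definite factor bounded below, because $\lambda(k)$ is bounded away from $0$ on the compact region where the $p_m$ stay away from $0$ and $1$. Hence $b(k)-a(k)\to 0$, so all $x_m(k)$ converge to a common limit $x^\ast$. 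Finally the normalization $\sum_m p_m(k)=1$, i.e. $\sum_m \rho_m x_m(k)=1$ together with $\sum_m\rho_m=1$ (which follows from Lemma~1's normalization $\sum\overline V_m=\pi^{-1}$ and $\rho_m=\pi\overline V_m$) forces $x^\ast=1$, that is $p_m(k)\to\rho_m$, which is (24).

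The main obstacle is the bookkeeping needed to keep the iterates in the admissible region: one must verify that $0<P_m(k)<1$ is preserved for all $k$ (so that $W(p(k))>0$ and the multiplier bounds hold uniformly), and that the contraction factor does not degenerate to $1$ as $k\to\infty$ — i.e. that the trajectory does not drift toward the boundary of the simplex. I would handle the first point by an induction showing $P_m(k+1) = P_m(k)[\,1 + \lambda(k)(S(k)-x_m(k))/x_m(k)\cdot\dots\,]$ stays positive because it is a genuine convex combination scaled by $\rho_m$, and $P_m(k+1)<1$ from $\sum_n P_n(k+1)=1$ with all terms positive; for the second, the shrinking interval $[a(k),b(k)]$ and positivity give a uniform lower bound on $\min_m P_m(k)$ along the trajectory once $k$ is large, which in turn bounds $\lambda(k)$ away from $0$ and closes the argument.
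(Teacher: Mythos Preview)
Your approach and the paper's start from the same observation --- writing $x_m=p_m/\rho_m$ and recognizing $S=\sum_n p_n x_n$ as the $p$-weighted average of the fluctuations (this is exactly the paper's display~(25)) --- but then diverge. The paper's argument is a short heuristic: it splits the simplex into zones according to the sign of $p_m-\rho_m$, asserts that in the zone $p_m<\rho_m$ the $m$-th increment is positive and in the zone $p_m>\rho_m$ it is negative, and infers convergence from this coordinatewise monotonicity together with $\Delta P_m\to 0$. Your route is instead a contraction of the spread $[\min_m x_m,\max_m x_m]$, using that each $x_m(k{+}1)$ is a convex combination of $x_m(k)$ and $S(k)\in[\min_m x_m,\max_m x_m]$, and then pinning the common limit to $1$ via $\sum_m\rho_m x_m=1$. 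Your version is the sturdier of the two: the sign of the increment is that of $S-x_m$, not of $1-x_m$, and for $M\ge 2$ these need not agree (take $\rho_0=\rho_1=\rho_2=1/3$ and $p=(0.1,0.4,0.5)$: then $S=1.26>x_1=1.2$, so $p_1$ \emph{increases} although $p_1>\rho_1$), so the paper's per-zone monotonicity is not literally true while your nested-interval picture is.

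One slip to correct in your computation: formula~(17) as printed carries a typo --- tracing (8) with (15) gives $V_0^{(m)}(p)=\pi\,p_m\bigl[S-x_m\bigr]$, with $p_m$ rather than $\rho_m$ --- so after dividing by $\rho_m$ the multiplier in your scalar update is $\lambda_m(k)=\pi\,x_m(k)/W(p(k))$, which is $m$-dependent. This does not harm the nesting (each $x_m(k{+}1)$ still lies between $x_m(k)$ and $S(k)$ once $\lambda_m\in(0,1)$), but the inequality $\lambda_m<1$ now reads $(V_m,p)<W(p)$, and that is where a genuine quantitative hypothesis on the survival parameters must enter --- precisely the obstacle you already flag at the end.
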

\begin{proof}[\textit{Proof}] The property of the main components is used, which is specified by the sum
\begin{equation}
\sum_{n=0}^{M} p_n^2/\rho_n = \sum_{n=0}^{M} p_n(p_n/\rho_n),
\end{equation}
This means averaging the fluctuations of the probability relations $p_n/\rho_n$, $0\leq m\leq M$ on the distribution of frequencies at the current stage. In this case, the fluctuations of the ratios are equal to one for $p_n=\rho_n$,  $0\leq m\leq M$, and at the same time, the main component of the incremental regression function is also equal to one.\\
Consequently, the possible values of the frequency probabilities can be split into three zones:
\begin{equation*}
(+)\quad p_n<\rho_n\quad ; \qquad
(-)\quad p_n>\rho_n\quad ; \qquad
(0)\quad p_n=\rho_n\quad .
\end{equation*}
The signs of the incremental regression functions (16) - (18) in such a zones is the same:\\
in zone (+) the probabilities increase, in zone (-) they are decrease.

Therefore, there exists a limit (23) whose value is ensured by the necessary condition for the existence of a limit:
\begin{equation}
\lim_{k\to\infty}\Delta P_m(k+1)=0.
\end{equation}
\end{proof}

\section{Binary evolutionary process}

The binary EP $P_\pm(k)$, $k\geq0$, are determined by the following regression functions \cite{Eth-Kur}:
\begin{align}
&P_\pm(k+1)=W_\pm(p)/W(p) \ , \ \ k\geq0,\\
&W_\pm(p)=P_\pm(W_\pm p_\pm+p_\mp),\\
&W(0)=W_+(p_+)+W_-(p_-)=W_+p_+^2+2p_+p_-+W_-p_-^2.
\end{align}
The frequencies probabilities at $k$-th stage satisfy the usual conditions
$0\leq p_\pm\leq1$, $p_++p_-=1$. The survival parameters are also limited by the relation $0<W_\pm<1$.\\
For probability increments
\begin{equation}
\Delta P_\pm(k+1):=P_\pm(k+1)-P_\pm(k) \ , \ \ k\geq0,
\end{equation}
the corresponding regression functions of increments can be represented as follows:
\begin{equation}
W_0^\pm(p_\pm)=W_\pm(p_\pm)/W(p)-p_\pm,
\end{equation}
or equivalently
\begin{align}
&W_0^\pm(p_\pm)=V_0^\pm(p_\pm)/W(p),\\
&V_0^\pm(p_\pm)=W_\pm(p_\pm)-p_\pm W(p).
\end{align}
Now introduce direction parameters, based on the survival ones: $V_\pm:=1-W_\pm$ The relative equilibriums will be $\rho_\pm=V_\pm^{-1}$ with the normalization condition $V_++V_-=1$. Then the numerators (27) of the regression function has the following form:
\begin{equation}
W_\pm(p_\pm)=p_\pm(1-\pi p_\mp / \rho_\pm) \ , \ \ \pi:=\rho_+\rho_-.
\end{equation}
Therefore the numerator (32) transforms into the following:
\begin{equation}
V_0^\pm(p_\pm)=p_\mp W_\pm(p_\pm)-p_\pm W_\mp(p_\mp)=p_+p_-(\rho_\pm p_\mp-\rho_\mp p_\pm).
\end{equation}
The linear component has the following representation:
\begin{equation}
\rho_+p_--\rho_-p_+=-(p_+-\rho_+)=p_--\rho_-.
\end{equation}
So the regression functions of the increments of binary evolutionary processes are represented by the probability of fluctuations:
\begin{equation}
V_0^\pm(p_\pm)=- p_+p_-(p_\pm-\rho_\pm).
\end{equation}
The Wright-Fisher normalizing factor has the following representation:
\begin{equation}
W(p)=1-\pi[p_+^2/\rho_++p_-^2/\rho_-]=1-[\rho_-p_+^2+\rho_+p_-^2].
\end{equation}
The balance condition is also evident:
\begin{equation}
V_0^+(p_+)+V_0^-(p_-)\equiv0.
\end{equation}

\section{Conclusion}
The considered in the present work evolution processes serve as predictable components of stochastic models in population genetics and are represented by conditional mathematical expectations:
\begin{equation}\begin{split}
P_m(k+1):=E[S_N^{(m)}(k+1)\,|\,S_N(k)=P(k)] \ , \ \ 0\leq m\leq M \ , \ \ k\geq0.
\end{split}\end{equation}
The stochastic models in population genetics are determined by averaged sums
\begin{equation}
S_N(k):=\frac{1}{N}\sum^N_{n=1}\delta_n(k) \ , \ \ k\geq0.
\end{equation}
of random sample variables $\delta_n(k)$, $1\leq n\leq N$, which take values in a finite set with $M+1$ ($M\geq1$) states $E=\{e_0, e_1,\dots,e_M\}$ (see \cite{DKor10}). So the stochastic models (41) are defined by the sum of two components:
\begin{equation}
S_N(k+1)=V(S_N(k))+\Delta\mu_N(k+1) \ , \ \ k\geq0.
\end{equation}
The first, predictable component is generated by conditional mathematical expectations:
\begin{equation}
V_m(P_m(k))=P_m(k)+V_0^{(m)}(P_m(k))/W(P_m(k)) \ , \ \ 0\leq m\leq M \ , \ \ k\geq0.
\end{equation}
The second component forms a martingale differences
\begin{equation}
\Delta\mu_N(k+1)=S_N(k+1)-V(S_N(k)) \ , \ \ k\geq0,
\end{equation}
characterized by the first moments:
\begin{equation}\begin{split}
&E\Delta\mu_N^{(m)}(k+1)=0 \ , \ \ 0\leq m\leq M,\\
&E[(\Delta\mu_N^{(m)}(k+1))^2\,|\,S_N(k)]=\sigma_m^2(S_N(k)) \ , \ \ 0\leq m\leq M \ , \ \ k\geq0.
\end{split}\end{equation}
The conditional dispersion is determined by regression functions:
\begin{equation}
\sigma_m^2(p)=V_m(p)[1-V_m(p)] \ , \ \ 0\leq m\leq M.
\end{equation}
The asymptotical properties of stochastic models (42) - (45) by $N\to\infty$ as well as by $k\to\infty$, will be investigated in our next paper. The algorithms of phase merging \cite{Kor-Lim} and statistical estimation of drift parameter \cite{DK_13}, \cite{DK_lib} can be directly applied.


\begin{thebibliography}{99}

\bibitem{Eth-Kur}
Ethier S.N., Kurtz T.G. Markov Processes: Characterization and Convergence. –- Willey, NY, 1986, 534 p.

\bibitem{VK-DK}
Korolyuk V.~S., Koroliouk D. Diffusion approximation of stochastic Markov models with persistent regression.
Ukrainian Mathematical Journal. - 1995,  47(7), 1065 - 1073.

 \bibitem{Sk-Hop-Sal}
Skorokhod A.V., Hoppensteadt F.C., Salehi H. Random Perturbation Methods with Applications in Science and Engineering
Springer-Verlag, N.-Y., 2002, 488p.

\bibitem{DKor5}
D.Koroliouk, V,S.Koroliuk and N.Rosato, Equilibrium Process in Biomedical Data Analysis: the Wright-Fisher Model, Cybernetics and System Analysis, Springer NY, 2014, vol. 50, No. 6, 890-897, DOI 10.1007/s10559-014-9680-y

\bibitem{DKor6}
Koroliouk D. Two component binary statistical experiments with persistent linear regression. -- Theor. Probability and Math. Statist., AMS,  90 (2015), 103-114; DOI http://dx.doi.org/10.1090/tpms/952.

\bibitem{DK_8}
Koroliouk D.  Binary statistical experiments with persistent nonlinear regression. - Theor. Probability and Math. Statist., AMS,  No. 91, 2015, 71-80.

\bibitem{DKor10}
Koroliouk D. Multivariant statistical experiments with persistent linear regression and equilibrium, Theor. Probability and Math. Statist.,No.92, 2015, 71-78.

\bibitem{Kor-Lim}
Koroliuk V.S., Limnios N. (2005) Stochastic Systems in Merging Phase Space. -- World Scientific, Singapore - London, 331p.

\bibitem{DK_13}
Koroliouk D. Stationary statistical experiments and the optimal  estimator for a predictable component. - Journal of Mathematical Sciences,  Vol. 214, No. 2, 2016, 220-228.

\bibitem{DK_lib}
Koroliouk D. Dynamics of Statistical Experiments, ISTE-WILEY, London, 2020, 224 pp. 
\end{thebibliography}
\end{document}